\newcommand{\id}{\operatorname{id}}
\newcommand{\Proj}{\operatorname{Proj}}
\renewcommand{\Im}{\operatorname{Im}}
\newcommand{\di}{\operatorname{dir}}
   \theoremstyle{plain}
   \newtheorem{thm}{Theorem}[section]
   \newtheorem{prop}[thm]{Proposition}
   \newtheorem{cor}[thm]{Corollary}
   \theoremstyle{definition}
   \newtheorem{example}[thm]{Example}
   \theoremstyle{remark}
   \newtheorem{remark}[thm]{Remark}
   \numberwithin{equation}{section}
\title{Extensions of Hilbert $C^*$-modules: classification in simple cases}
\author{Vladimir Manuilov and Zhu Jingming}
\email{manuilov@mech.math.msu.su}
\email{zhugjingming\underline{\phantom{a}}math@163.com}
\address{Harbin Institute of Technology\\ 92 West Dazhi Str.\\ Harbin, Heilongjiang \\ 150001,
P. R. China\\ and Dept. of Mech. and Math.\\
Moscow State University\\
Leninskie Gory, Moscow\\ 
119991, Russia}
\address{Harbin Institute of Technology\\ 92 West Dazhi Str.\\ Harbin, Heilongjiang \\ 150001,
P. R. China}
\thanks{The first named author acknowledges partial support of the RFBR grant No. 10-01-00257 and of the Russian Government grant No. 11.G34.31.0005.}
\begin{document}

\begin{abstract}
Theory of extensions of Hilbert $C^*$-modules was developed by D. Baki\'c and B. Gulja\u s. An easy observation shows that in the case, when the underlying $C^*$-algebra extension is commutative and the Hilbert $C^*$-modules are projective of finite type, the algebraic properties of the corresponding Busby invariant allow to identify extensions with isometric maps of the corresponding vector bundles. When the Hilbert $C^*$-modules are free of rank one, we evaluate the set of extensions in topological terms.    

\end{abstract}

\maketitle

\section{Introduction}

Extensions of Hilbert $C^*$-modules were introduced by D. Baki\'c and B. Gulja\u s in \cite{BG}. 
Given an essential extension 
\begin{equation}\label{-1}
\begin{xymatrix}{
0\ar[r]& A\ar[r]& B\ar[r]& C\ar[r]& 0
}\end{xymatrix}
\end{equation}
of $C^*$-algebras, a short exact sequence 
\begin{equation}\label{-2}
\begin{xymatrix}{
0\ar[r]& V\ar[r]& W\ar[r]& Z\ar[r]& 0,
}\end{xymatrix}
\end{equation}
where $V$, $W$ and $Z$ are Hilbert $C^*$-modules over $A$, $B$ and $C$, respectively, is a Hilbert $C^*$-module extension if the connecting maps and inner products in (\ref{-2}) are compatible with the connecting maps in (\ref{-1}). In some aspects, Hilbert $C^*$-module extensions are similar to $C^*$-algebra extensions. In particular, 
D. Baki\'c and B. Gulja\u s in \cite{BG2} have constructed the multiplier Hilbert $C^*$-module and the Busby invariant for Hilbert $C^*$-module extensions with properties similar to those for $C^*$-algebras. But classification of Hilbert $C^*$-module extensions is much more difficult than that of $C^*$-algebra extensions. One of the reasons for that is the abundance of Hilbert $C^*$-modules over a $C^*$-algebra. We derive here some simple consequences from their theory. In our attempt to classify Hilbert $C^*$-module extensions we restrict ourselves to projective modules of finite type. Since such modules are easier to handle in the commutative case, we also restrict ourselves to the case of commutative $C^*$-algebras. In the case, when the $C^*$-algebras are commutative and the Hilbert $C^*$-modules $V$ and $Z$ are projective of finite rank we show that the homotopy classes of extensions (\ref{-2}) can be described in terms of isometries of the corresponding vector bundles. If, moreover, $V$ and $Z$ are free of rank one we show that homotopy equivalence classes the extensions (\ref{-2}) are classified by the first \v Cech cohomology of the Stone-\v Cech corona of the Gelfand spectrum $U=\widehat{A}$ of the $C^*$-algebra $A$.  

The first named author is grateful to the Institut Henri Poincar\'e for hospitality.

\section{Basic definitions}

Let us recall the basic definitions for Hilbert $C^*$-module extensions from \cite{BG,BG2}.

Let $V$ and $W$ be (right) Hilbert $C^*$-modules over $C^*$-algebras $A$ and $B$ respectively. For a $C^*$-algebra morphism $\phi$: $A\rightarrow B$, a map $\Phi$: $V\rightarrow W$ is called a $\phi$-morphism of Hilbert $C^*$-modules (or just Hilbert $C^*$-module morphism when one doesn't need to specify $\phi$) if $\langle\Phi(v),\Phi(v')\rangle =\phi(\langle v,v'\rangle)$ holds for all $v,v'\in V$. It is known that a $\phi$-morphism is linear, contractive and respects the module structure. If $A$ is an ideal in $B$ then the ideal submodule $WA$ of $W$ is defined as 
$$
WA =\{wa : w\in W, a\in A\} = \{w\in W : \langle w,w\rangle\in A\}. 
$$
For any quotient map $\pi$: $B\rightarrow B/A$, we have a canonical quotient map $\Pi$: $W\rightarrow W/WA$, which is obviously a $\pi$-morphism. If $V$ is full and $\phi$ is injective then $\Phi$ is injective. 

If $\Im\Phi=WA$ then there is a well defined quotient Hilbert $C^*$-module $Z=W/\Im\Phi$ over $B/A$. In this case the middle term $W$ of the short exact sequence
\begin{equation}\label{2}
\begin{xymatrix}{
0\ar[r]& V\ar[r]^-{\Phi}& W\ar[r]^-{\Pi}& Z\ar[r]& 0
}\end{xymatrix}
\end{equation}
is an extension of $Z$ by $V$ with the underlying $C^*$-algebra extension
\begin{equation}\label{1}
\begin{xymatrix}{
0\ar[r]& A\ar[r]^-{\phi}& B\ar[r]^-{\pi}& B/A\ar[r]& 0
}\end{xymatrix}
\end{equation}
(here $\Phi$ is a $\phi$-morphism and $\Pi$ is a $\pi$-morphism). 

Similarly to $C^*$-algebra extensions, D. Baki\'c and B. Gulja\u s in \cite{BG} have defined, for a full Hilbert $C^*$-module $V$ over $A$, the multiplier Hilbert module $M(V)$ over the multiplier $C^*$-algebra $M(A)$ (as the module of all adjointable module homomorphisms from $A$ to $V$; note that they used the notation $V_d$ for $M(V)$), and the quotient Hilbert $C^*$-module $Q(V)=M(V)/V$ over $Q(A)=M(A)/A$. This gives the `maximal' extension 
$$
\begin{xymatrix}{
0\ar[r]& V\ar[r]^-{\Gamma}& M(V)\ar[r]^-{\Sigma}& Q(V)\ar[r]& 0,
}\end{xymatrix}
$$
in which $\Gamma(v)=l_{v}$ is defined by $l_v(a)=va$, $v\in V$, $a\in A$,
with the underlying $C^*$-algebra extension
$$
\begin{xymatrix}{
0\ar[r]& A\ar[r]^-{\gamma}& M(A)\ar[r]^-{\sigma}& Q(A)\ar[r]& 0
}\end{xymatrix}
$$

Note that a $C^*$-algebra extension (\ref{1}) determines $C^*$-algebra morphisms $\lambda$:$B\to M(A)$ and $\delta:B/A\to Q(A)$, the latter being the Busby invariant for the extension (\ref{1}).
Maximality means, by \cite{BG}, Thm 1.2, that for a Hilbert $C^*$-module extension (\ref{2}), there is a Hilbert module $\lambda$-morphism $\Lambda$:$W\to M(V)$ and a $\delta$-morphism $\Delta:Z\to Q(V)$ such that the diagram 
\begin{equation}\label{d}
\begin{xymatrix}{
0\ar[r]& V\ar[r]^-{\Phi}\ar@{=}[d]& W\ar[r]^-{\Pi}\ar[d]^-{\Lambda}& Z\ar[r]\ar[d]^-{\Delta}& 0\\
0\ar[r]& V\ar[r]^-{\Gamma}& M(V)\ar[r]^-{\Sigma}& Q(V)\ar[r]& 0
}\end{xymatrix}
\end{equation}
commutes.

In the diagram (\ref{d}), the map $\Delta$ is called the Busby invariant corresponding to the extension
(\ref{2}). It is shown in \cite{BG2} that for an arbitrary Hilbert $C^*$-module $V$ and a given morphism
$\Delta: Z\to Q(V)$, there is (unique up to a unitary equivalence) an extension
$W$ such that $\Delta$ is the Busby invariant for it.


Prop. 3.4 of \cite{BG2} states that if $V$ and $Z$ are full Hilbert $C^*$-modules then for any $\delta$-morphism $\Delta:Z\to Q(V)$ (for some $\delta:B/A\to Q(A)$) there exists a Hilbert $C^*$-module $W$ over $B$ such that the Busby invariant $\Delta_W$ of the extension (\ref{2}) equals $\Delta$. Th. 3.6 of \cite{BG2} specifies that if $W$ is assumed to be full too then there is a one-to-one correspondence between unitary equivalence classes of extensions (\ref{2}) and Hilbert $C^*$-module morphisms $\Delta:Z\to Q(V)$.

Besides the unitary equivalence, it is natural to consider a more flexible equivalence relation for extensions. We call two extensions, $0\to V\to W_i\to Z\to 0$, $i=0,1$, {\it homotopic} if there is a path of Hilbert $C^*$-module morphisms $\Delta_t:Z\to Q(V)$, $t\in[0,1]$, such that $\Delta_i=\Delta_{W_i}$, $i=0,1$, and the map $t\mapsto\Delta_t(z)$ is norm-continuous for any $z\in Z$. For full Hilbert $C^*$-modules, unitary equivalence obviously implies homotopy equivalence.

\section{Classification problem}

Classification problem for Hilbert $C^*$-module extensions can be posed as follows. Let an essential $C^*$-algebra extension (\ref{1}) be given, and let $V$ and $Z$ be full Hilbert $C^*$-modules over $A$ and $B/A$ respectively. Classify all $W$ of the form (\ref{2}) up to unitary equivalence or up to homotopy. 

To get some insight, let us consider a simple example. 

\begin{example}
Let $Z=B/A$. For a Hilbert $C^*$-module $M$, denote by $\operatorname{Iso}(M)$ the set of all isometries on $M$, i.e. all adjointable operators $T$ on $M$ such that $T^*T=\id_M$. Since in this case any Busby invariant $\Delta:Z\to Q(V)$ is completely determined by its value $\Delta(1)$ on the unit of $Z=B/A$. Note that $\langle\Delta(1),\Delta(1)\rangle=\Delta(1)^*\Delta(1)=1$, and $\Delta(1)\in \operatorname{Iso}(Q(V))$ can be arbitrary, hence the homotopy classes of Hilbert $C^*$-module extensions are identified with the path-connected components of $\operatorname{Iso}(Q(V))$.  

\end{example}

\begin{example}
Let $A=K$ be the $C^*$-algebra of compact operators on a separable Hilbert space $H$, let $B=B(H)$ denote all bounded operators on $H$, and let $B/A=Q$ be the Calkin algebra. Take $V=K$, $Z=Q$. 

\end{example}

\begin{prop}
Homotopy classes of Hilbert $C^*$-module extensions $0\to K\to W\to Q\to 0$ are in one-to-one correspondence with the set $\mathbb Z\cup\infty$.

\end{prop}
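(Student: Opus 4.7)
The plan is to invoke the preceding example. With $A=V=K$, the multiplier module $M(V)$ coincides with the multiplier $C^*$-algebra $M(K)=B(H)$, so $Q(V)=B(H)/K=Q$, viewed as a right Hilbert module over itself. Adjointable operators on $Q$ as a right $Q$-module are precisely left multiplications by elements of $M(Q)=Q$, so $\operatorname{Iso}(Q(V))$ is canonically identified with $\operatorname{Iso}(Q)=\{v\in Q:v^*v=1\}$, the set of isometric elements of the Calkin algebra. By the preceding example (with $Z=Q=B/A$), the homotopy classes of extensions are in bijection with the path components of $\operatorname{Iso}(Q)$, so it suffices to prove $\pi_0(\operatorname{Iso}(Q))\cong\mathbb{Z}\cup\{\infty\}$.

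Separate $\operatorname{Iso}(Q)$ into the unitary group $U(Q)=\{v:vv^*=1\}$ and the proper isometries $\{v:1-vv^*\ne 0\}$. Since a projection in a $C^*$-algebra has norm $0$ or $1$, any norm-continuous path of projections starting at $0$ is identically $0$; applied to $t\mapsto 1-v_tv_t^*$ along a path $v_t\in\operatorname{Iso}(Q)$, this shows that $U(Q)$ is a union of path components of $\operatorname{Iso}(Q)$. The path components of $U(Q)$ are then computed from the six-term exact sequence of the Calkin extension: since $K_i(B(H))=0$ for $i=0,1$, one obtains $K_1(Q)\cong K_0(K)=\mathbb{Z}$, and, as $Q$ is unital and properly infinite, $\pi_0(U(Q))=K_1(Q)=\mathbb{Z}$, with the generator represented by the image of the unilateral shift (and integers labeled by the Fredholm index of any lift).

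The heart of the argument is to show that all proper isometries form a single path component, which we label $\infty$. Given proper isometries $v_1,v_2$, the projections $p_i=1-v_iv_i^*$ are nonzero, hence Murray--von Neumann equivalent in the simple purely infinite algebra $Q$; pick a partial isometry $w\in Q$ with $w^*w=p_1$ and $ww^*=p_2$. A direct check (using $v_2^*w=0$ and $wv_1=0$) shows that $u:=v_2v_1^*+w$ is a unitary in $Q$ with $uv_1=v_2$. The class $[u]\in K_1(Q)=\mathbb{Z}$ may be nonzero, but one can correct it: replacing $w$ by $ws$ for a unitary $s$ in the full corner $p_1Qp_1$ replaces $u$ by $u\cdot(1-p_1+s)$, shifting $[u]$ by the image of $[s]\in K_1(p_1Qp_1)$ in $K_1(Q)$, a map which is surjective since the inclusion is a Morita equivalence on $K$-theory. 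Crucially, $p_1v_1=0$ forces $sv_1=0$, so the relation $uv_1=v_2$ is preserved; arranging $[u]=0$, any path $u_t$ from $1$ to $u$ in $U(Q)$ yields a path $t\mapsto u_tv_1$ of isometries connecting $v_1$ to $v_2$. The main obstacle is exactly this last step -- controlling the $K_1$-class of the implementing unitary without spoiling $uv_1=v_2$ -- resolved by the corner-algebra adjustment. Combined with the previous paragraph, this yields $\pi_0(\operatorname{Iso}(Q))=\mathbb{Z}\cup\{\infty\}$, as required.
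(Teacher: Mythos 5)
Your argument is correct and lands on the same decomposition as the paper (Fredholm part labelled by an integer, infinite-defect part giving the extra point $\infty$), but the core computation is carried out by genuinely different means. The paper works upstairs in $B(H)$: it lifts the Busby isometry $\Delta(1)=\sigma(F)$ to $F\in B(H)$, perturbs by compacts so that $1-F^*F$ is a finite-rank projection, and then invokes classical Fredholm theory --- the index classifies the components when $1-FF^*$ is finite-dimensional, and all essential isometries with infinite-dimensional defect are connected. You instead work intrinsically in the Calkin algebra: you identify the extensions with $\pi_0(\operatorname{Iso}(Q))$, show that the unitary group $U(Q)$ is a union of path components via the norm-gap for projections, compute $\pi_0(U(Q))=K_1(Q)=\mathbb Z$ from the six-term sequence together with Cuntz's theorem for the purely infinite simple algebra $Q$ (your phrase ``properly infinite'' is slightly too weak as a justification, but pure infiniteness of $Q$ is what is actually used, so this is cosmetic), and connect any two proper isometries by an explicit unitary $u=v_2v_1^*+w$ whose $K_1$-class you correct inside the corner $p_1Qp_1$ without disturbing $uv_1=v_2$ (here you implicitly use $K_0(Q)=0$ to get $p_1\sim p_2$, which is fine). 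What your route buys is a complete, checkable proof of the two steps the paper only asserts --- that the Fredholm and non-Fredholm cases cannot be joined by a path of isometries in $Q$, and that all infinite-defect isometries lie in one component; what the paper's route buys is brevity and the avoidance of any $K$-theoretic machinery, at the cost of leaving the lifting of paths from $Q$ to $B(H)$ and the connectivity claims implicit.
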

\begin{proof}
Consider $1\in Q$ as a generating element of $Q$ as a module over itself. By ??? of \cite{BG}, $M(V)=B$, $Q(V)=Q$. The Busby invariant $\Delta$ is completely determined by an isometry $\Delta(1)=\sigma(F)\in Q$, where $F\in B(H)$ satisfies $F^*F=1$ modulo compacts and where $\sigma:B(H)\to Q$ is the canonical surjection. Perturbing $F$ by a compact operator, we may assume that $1-F^*F$ is a finitedimensional projection in $H$. If the projection $1-FF^*$ is finitedimensional too then the homotopy class of $F$ is determined by its index. If $1-FF^*$ is infinitedimensional then, up to compact perturbation, one may assume that $F^*F=1$ (still with infinitedemensional $1-FF^*$, and $FF^*$ is infinitedimensional as well). Any two such $F$'s are homotopic.  

\end{proof} 

\section{Commutative case}

Let an essential extension (\ref{1}) consist of commutative $C^*$-algebras. This means that $B=C(X)$ for some compact Hausdorff space $X$ with a dense open subset $U\subset X$ such that $C_0(U)=A$. Then $C(\partial U)=B/A$.

Let $\beta U$ denote the Stone-\v Cech compactification of $U$. Then
\begin{equation}\label{3-}
\begin{xymatrix}{
0\ar[r]& C_0(U)\ar[r]& C(\beta U)\ar[r]& C(\beta U\setminus U)\ar[r]& 0
}\end{xymatrix}
\end{equation}
is the universal essential $C^*$-algebra extension for 
\begin{equation}\label{3}
\begin{xymatrix}{
0\ar[r]& C_0(U)\ar[r]& C(X)\ar[r]& C(\partial U)\ar[r]& 0
}\end{xymatrix}
\end{equation}
with the canonical (injective) maps $\gamma:C(X)\to C(\beta U)$ and $\delta:C(\partial U)\to C(\beta U\setminus U)$ (the latter being the Busby invariant for (\ref{3-})).

Let $\eta$, $\xi$ be finitedimensional locally trivial vector bundles over $U$ and $\partial U$, respectively.
A Hermitian structure on $\xi$ endows the set $\Gamma(\xi)$ of all continuous sections of $\xi$ with a structure of a Hilbert $C^*$-module over $C(\partial U)$. Similarly, the set $\Gamma_0(\eta)$ of all continuous sections of $\eta$ vanishing at $\partial U$ has a structure of a Hilbert $C^*$-module over $C_0(U)$, given a Hermitian structure on $\eta$.

Our aim is to classify Hilbert $C^*$-module extensions of the form
\begin{equation}\label{4}
\begin{xymatrix}{
0\ar[r]& \Gamma_0(\eta)\ar[r]& W\ar[r]& \Gamma(\xi)\ar[r]& 0,
}\end{xymatrix}
\end{equation}
where $\eta$ is additionally assumed to be a subbundle of a trivial finitedimensional vector bundle (this is not automatic as $U$ is not compact). Let the fiber of this trivial vestor bundle be $\mathbb C^m$.

First, let us evaluate $Q(V)$ for $V=\Gamma_0(\eta)$. Let $P:U\to \Proj(\mathbb C^m)$ be a continuous mapping on $U$ taking values in the space of orthogonal projections on $\mathbb C^m$, such that $\Im P=\eta$ (equivalently, $P\cdot (C_0(U)^m)=\Gamma_0(\eta)$). As the matrix entries of $P$ are bounded functions on $U$, so they extend to continuous functions on $\beta U$. Being an orthogonal projection is an algebraic relation, hence the extension $\bar P$ of $P$ to $\beta U$ is a projection-valued mapping too. In particular, its restriction $\dot P$ onto $\beta U\setminus U$ is also a projection-valued mapping. It follows from the definition of $M(V)$ that if $V=A$ then $M(V)=M(A)$ (\cite{BG}). The same argument shows that $M(A^m)=M(A)^m$ and that $M(\bar P\cdot A^m)=\bar P\cdot M(A)^m$ for any projection $\bar P\in M_m(M(A))$, hence $M(P\cdot(C_0(U)^m))=\bar P\cdot C(\beta U)^m$. Thus $\dot P$ determines a vector bundle over $\beta U\setminus U$. Denote this vector bundle by $\zeta=\zeta(\eta)$.

Thus classification problem reduces to classification of Hilbert $C^*$-module morphisms $\Delta:\Gamma(\xi)\to\Gamma(\zeta)$. To this end we need the following proposition. It should be known to specialists, but we were unable to find a reference.

\begin{prop}
Let $\xi$, $\zeta$ be Hermitian vector bundles over compact Hausdorff spaces $Y$ and $Z$, respectively, and let $f:Z\to Y$ be a continuous mapping. The Hilbert $C^*$-module morphisms $\Gamma(\xi)\to\Gamma(\zeta)$ are in one-to-one correspondence with the set of fiberwise isometries from $f^*(\xi)$ to $\zeta$. 

\end{prop}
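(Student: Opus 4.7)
My plan is to extract a fiberwise isometry from a morphism $\Phi$ by pointwise evaluation at each $z\in Z$, and to invert this construction by the obvious formula. The first observation is that every Hilbert $C^*$-module morphism $\Phi:\Gamma(\xi)\to\Gamma(\zeta)$ is a $\phi_f$-morphism, where $\phi_f:C(Y)\to C(Z)$ is the $C^*$-algebra morphism $g\mapsto g\circ f$; in particular $\Phi$ satisfies $\Phi(sg)=\Phi(s)(g\circ f)$, and evaluating $\langle\Phi(s),\Phi(s)\rangle=\langle s,s\rangle\circ f$ at $z\in Z$ yields the pointwise norm identity $\|\Phi(s)(z)\|=\|s(f(z))\|$. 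This norm identity is the fact that drives the whole argument.

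With it in hand, I would define $u_z:\xi_{f(z)}\to\zeta_z$ by $u_z(s(f(z))):=\Phi(s)(z)$. It is well defined because the evaluation $\Gamma(\xi)\to\xi_{f(z)}$ is surjective and because the norm identity forces $\Phi(s_1-s_2)(z)=0$ whenever $s_1$ and $s_2$ agree at $f(z)$; it is $\mathbb{C}$-linear by $\phi_f$-linearity of $\Phi$ and isometric by the displayed identity.

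The main technical step, and the principal obstacle, is upgrading the pointwise family $\{u_z\}_{z\in Z}$ to a continuous bundle morphism $u:f^*\xi\to\zeta$. I would handle this by choosing finitely many global sections $e_1,\ldots,e_N\in\Gamma(\xi)$ that generate $\Gamma(\xi)$ as a $C(Y)$-module; such a system exists because $\xi$ is a direct summand of a trivial bundle over the compact space $Y$. Then the pulled-back sections $f^*e_i:z\mapsto e_i(f(z))$ generate $\Gamma(f^*\xi)$, and by construction $u$ sends $f^*e_i$ to the continuous section $\Phi(e_i)\in\Gamma(\zeta)$. Hence $u$ maps a generating system of continuous sections of $f^*\xi$ to continuous sections of $\zeta$, which, in local orthonormal frames for both bundles, exhibits $u$ as a matrix-valued function with continuous entries, and therefore as a continuous fiberwise isometric bundle map.

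Conversely, given a fiberwise isometry $u:f^*\xi\to\zeta$, I would set $\Phi(s)(z):=u_z(s(f(z)))$: continuity of $\Phi(s)$ is automatic from continuity of $u$ and $s$, the $\phi_f$-module identity follows from fiberwise $\mathbb{C}$-linearity of $u$, and the inner-product compatibility is the fiberwise isometry property. The two constructions are inverse to one another by the defining formula $u_z(s(f(z)))=\Phi(s)(z)$, which establishes the claimed bijection.
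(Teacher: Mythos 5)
Your proof is correct and follows essentially the same route as the paper's: extract the fiberwise map by evaluating $\Phi(s)$ at $z$ against $s(f(z))$, check well-definedness and isometry via the identity $\langle\Phi(s),\Phi(s)\rangle=\langle s,s\rangle\circ f$, and invert by $s\mapsto u\circ s\circ f$. Your explicit verification that the fiberwise family $\{u_z\}$ assembles into a \emph{continuous} bundle map (via a finite generating system of sections) is a detail the paper's proof silently omits, so it is a welcome addition rather than a divergence.
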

\begin{proof}
Let $\Delta:\Gamma(\xi)\to\Gamma(\zeta)$ be a Hilbert $C^*$-module morphism. 
Denote by $\xi_y$ the fiber of $\xi$ over the point $y\in Y$. For any $z\in Z$, there is a canonical isomorphism  $i_z:\xi_{f(z)}\to f^*(\xi)_z$. Let $v\in \xi_{f(z)}$, $v'=i_z(v)\in f^*(\xi)_z$. Given a point $z\in Z$, take a section $s_v:Y\to\xi$ such that $s_v(f(z))=v$. Set $D_\Delta(v')=\Delta(s_v)|_z\in\zeta_z$ (evaluation of the section $\Delta(s_v)$ at $z$). 

Let $\widehat{f}:C(Y)\to C(Z)$ be the $*$-homomorphism Gelfand-dual to $f:Z\to Y$, $\widehat{f}(\alpha)=\alpha\circ f$ for any $\alpha\in C(Y)$.

If $s'\in\Gamma(\xi)$ is another section such that $s'(f(z))=s_v(f(z))=v$ then $s_v-s'$ lies in the Hilbert $C^*$-submodule $\Gamma_y(\xi)$ consisting of all sections of $\xi$ vanishing at the point $y=f(z)$. As $\Delta$ is a Hilbert $C^*$-module morphism, so the function 
$$
\langle\Delta(s_v-s'),\Delta(s_v-s')\rangle=\widehat{f}(\langle s_v-s',s_v-s'\rangle)
$$ 
vanishes at the point $z$, hence $\Delta(s_v)|_z=\Delta(s')|_z$, therefore, the map $D_\Delta$ is well defined.

Note that a point $z\in Z$ defines not only the $C^*$-algebra extension $0\to C_0(Z\setminus z)\to C(Z)\to \mathbb C\to 0$, but also the corresponding Hilbert $C^*$-module extension $0\to \Gamma_z(\zeta)\to\Gamma(\zeta)\to\zeta_z\to 0$. In particular, the norm in the fiber $\zeta_z$ coinsides with the quotient Hilbert $C^*$-module norm in $\Gamma(\zeta)/\Gamma_z(\zeta)$. If $u\in\zeta_{z}$ then 
$$
\|u\|^2=\inf_{s\in\Gamma(\zeta):s(z)=u}\|s\|^2=\inf_{s\in\Gamma(\zeta):s(z)=u}\|\langle 
s,s\rangle\|=|\langle s,s\rangle|_{z}|
$$ 
for any $s\in\Gamma(\zeta)$ with $s(z)=u$.

As
$$
\|D_\Delta(v')\|^2=|\langle D_\Delta(v'),D_\Delta(v')\rangle|_z|=|\langle \Delta(s_v),\Delta(s_v)\rangle|_z|=
|\langle s_v,s_v\rangle|_{f(z)}|=\|v\|^2=\|v'\|^2,
$$
so $D_\Delta$ is isometric.

Conversely, let $D:f^*(\xi)\to\zeta$ is a fiberwise isometry of vector bundles over $Z$,
$$
\begin{xymatrix}{
f^*(\xi)\ar[rr]^-{D}\ar[dr]&&\zeta\ar[dl]\\
&Z.&
}\end{xymatrix}
$$

For a section $s\in\Gamma(\xi)$, $s:Y\to\xi$, define $\Delta_D$ by $\Delta_D(s)=D\circ s\circ f:Z\to\zeta$. This gives a map $\Delta_D:\Gamma(\xi)\to\Gamma(\zeta)$. Since $D$ is isometric,
$$
\langle\Delta_D(s),\Delta_D(s')\rangle=\langle D\circ s\circ f,D\circ s'\circ f\rangle=\langle s\circ f,s'\circ f\rangle=\widehat{f}(\langle s,s'\rangle)
$$
for any sections $s,s'\in\Gamma(\xi)$, therefore, $\Delta_D$ is a Hilbert $C^*$-module morphism.

Finally, it is easy to see that $\Delta_{D_\Delta}=\Delta$ and $D_{\Delta_D}=D$.

\end{proof}

\begin{cor}
The set of homotopy classes of Hilbert $C^*$-module extensions of the form (\ref{4}), where $\xi$ and $\eta$ are vector bundles that are direct summands in trivial finitedimensional bundles, are in one-to-one correspondence with the set of homotopy classes of fiberwise isometries $f^*(\xi)\to\zeta$, where $\zeta$ was consturcted out of $\eta$ as above, over the base $\beta U\setminus U$.

\end{cor}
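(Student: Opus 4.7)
The plan is to combine the Bakić--Guljaš classification of extensions with the proposition just proved, and then check that the resulting bijection respects the respective homotopy relations.

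First, by Theorem 3.6 of \cite{BG2} and the computation $Q(\Gamma_0(\eta))=\Gamma(\zeta)$ carried out just before the preceding proposition, the homotopy classes of extensions of the form (\ref{4}) are in one-to-one correspondence with Hilbert $C^*$-module morphisms $\Delta:\Gamma(\xi)\to\Gamma(\zeta)$, modulo the relation of being connected by a path $\Delta_t$ with $t\mapsto\Delta_t(s)$ norm-continuous for every $s\in\Gamma(\xi)$. Observe next that the inclusions $\partial U\hookrightarrow X$ and $U\hookrightarrow\beta U$ combine to yield a canonical continuous map $f:\beta U\setminus U\to\partial U$, which is Gelfand-dual to the $*$-homomorphism $\delta:C(\partial U)\to C(\beta U\setminus U)$ of (\ref{3-}). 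The preceding proposition, applied with $Y=\partial U$ and $Z=\beta U\setminus U$, therefore identifies these morphisms $\Delta$ with fiberwise isometries $D:f^*(\xi)\to\zeta$ over $\beta U\setminus U$.

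It remains to verify that the bijection $\Delta\leftrightarrow D$ of the preceding proposition is compatible with homotopy. For one direction, suppose $\Delta_t$ is a norm-continuous path. Fix $z\in\beta U\setminus U$, set $y=f(z)$, and choose a trivializing neighborhood of $\xi$ near $y$ together with local sections $s_1,\dots,s_n\in\Gamma(\xi)$ that are pointwise orthonormal in a neighborhood of $y$; a vector $v\in f^*(\xi)_z$ is then uniquely written as $\sum a_i\,i_z(s_i(y))$, and the formula $D_{\Delta_t}(v)=\sum a_i\,\Delta_t(s_i)(z)$ shows that $(t,v)\mapsto D_{\Delta_t}(v)$ is jointly continuous because each $\Delta_t(s_i)$ varies norm-continuously in $t$ and continuously in $z$. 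Conversely, suppose $D_t$ is a continuous family of fiberwise isometries, meaning the total map $[0,1]\times f^*(\xi)\to\zeta$ is continuous. For any fixed $s\in\Gamma(\xi)$, $\Delta_{D_t}(s)=D_t\circ s\circ f$ is a continuous family of sections of $\zeta$ over the compact space $\beta U\setminus U$, so pointwise continuity of $(t,z)\mapsto D_t(s(f(z)))$ upgrades to uniform, i.e.\ norm, continuity in $t$.

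The main obstacle is precisely the passage in the last paragraph between the two notions of continuity: the norm-continuity of sections on the one hand and the joint continuity of fiberwise bundle maps on the other. This is settled by compactness of the corona $\beta U\setminus U$ together with the local triviality of $\xi$, which are the two standing assumptions available here; once this compatibility is in place, the bijection of the proposition descends verbatim to the desired bijection of homotopy classes.
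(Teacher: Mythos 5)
Your argument is correct and follows exactly the route the paper intends: the paper states this corollary without proof, treating it as immediate from the preceding proposition together with the Baki\'c--Gulja\u s correspondence between extensions and Busby invariants $\Delta:\Gamma(\xi)\to Q(\Gamma_0(\eta))=\Gamma(\zeta)$. Your only substantive addition is the (correct and worthwhile) verification that norm-continuous paths of morphisms correspond to jointly continuous homotopies of fiberwise isometries, a compatibility check the paper silently omits.
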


As a corollary, we obtain the following classification. Let $V_{k,m}$ denote the Stieffel manifold of $k$-frames in $\mathbb C^m$.

\begin{thm}
Let $Z$ be a free module of rank $k$ over $C(\partial U)$, $V$ a Hilbert $C^*$-module over $C_0(U)$ such that $Q(V)$ is a free module of rank $m$ over $C(\beta U\setminus U)$ (this holds, e.g., when $V=C_0(U)^m$). Then the set of homotopy classes of Hilbert $C^*$-module extensions with given $V$ and $Z$ and with the underlying $C^*$-algebra extension (\ref{3}) is in one-to-one correspondence with the set $[\beta U\setminus U, V_{k,m}]$ of homotopy classes of maps from the Stone-\v Cech corona $\beta U\setminus U$ to $V_{k,m}$.   

\end{thm}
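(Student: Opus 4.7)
The plan is to apply the preceding Corollary directly. That Corollary identifies the set of homotopy classes of Hilbert $C^*$-module extensions of the form (\ref{4}) with the set of homotopy classes of fiberwise isometries $f^*(\xi)\to\zeta$ over the base $\beta U\setminus U$, where $\xi$ is the Hermitian bundle over $\partial U$ whose space of sections is $Z$, $\zeta$ is the bundle over $\beta U\setminus U$ constructed from $V$ so that $Q(V)=\Gamma(\zeta)$, and $f:\beta U\setminus U\to\partial U$ is the continuous map Gelfand-dual to the Busby invariant $\delta:C(\partial U)\to C(\beta U\setminus U)$ of the underlying $C^*$-algebra extension (\ref{3}).

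First I would translate the algebraic hypotheses into bundle-theoretic ones. Since $Z$ is free of rank $k$ over $C(\partial U)$, the bundle $\xi$ is trivial of rank $k$ over $\partial U$; in particular $f^*(\xi)$ is canonically trivial of rank $k$ over $\beta U\setminus U$. Since $Q(V)$ is free of rank $m$ over $C(\beta U\setminus U)$, the bundle $\zeta$ is trivial of rank $m$ over $\beta U\setminus U$. (The parenthetical example $V=C_0(U)^m$ falls under this because in that case the projection $P$ in the construction preceding the Proposition is the identity, so $\bar P$ and $\dot P$ are as well, and $\zeta$ is the trivial rank-$m$ bundle.)

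Next, once global orthonormal frames for the trivial bundles $f^*(\xi)$ and $\zeta$ are fixed, a fiberwise isometry $D:f^*(\xi)\to\zeta$ is the same data as a continuous map $\beta U\setminus U\to V_{k,m}$ sending a point $x$ to the matrix of $D_x$ in the chosen frames, since $V_{k,m}$ is precisely the space of linear isometries $\mathbb C^k\to\mathbb C^m$. A norm-continuous path of fiberwise isometries corresponds to a continuous homotopy of such maps, and vice versa, giving a bijection between homotopy classes of fiberwise isometries $f^*(\xi)\to\zeta$ and $[\beta U\setminus U,V_{k,m}]$.

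The only point deserving some care is independence of the chosen trivializations: different trivializations differ by continuous $U(k)$- and $U(m)$-valued functions on $\beta U\setminus U$, which act on $V_{k,m}$ by homeomorphisms and therefore merely permute the set $[\beta U\setminus U,V_{k,m}]$ by a bijection. In particular there is a well-defined bijection with this set, even if the individual representing map depends on the trivialization. I do not expect any serious obstacle here, since the real work is already contained in the Proposition and its Corollary; the theorem is essentially a specialization of those statements to the trivial-bundle setting.
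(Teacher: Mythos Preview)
Your proposal is correct and follows essentially the same route as the paper's (very brief) proof: fix an orthonormal frame in the trivial bundle corresponding to $Z$, so that the Busby invariant $\Delta$ is determined by the image of this frame in the trivial rank-$m$ bundle over $\beta U\setminus U$, i.e.\ by a map to $V_{k,m}$. Your version is more detailed (you spell out the pullback $f^*(\xi)$ and discuss independence of the chosen trivializations), but the underlying idea is identical; one small point is that the preceding Corollary is literally stated only for $V=\Gamma_0(\eta)$, so for general $V$ with $Q(V)$ free you should invoke the Proposition (and the Baki\'c--Gulja\u s bijection between extensions and Busby invariants) directly rather than the Corollary---which is exactly what both you and the paper implicitly do.
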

\begin{proof}
Fix an orthonormal basis in the trivial vector bundle over $\partial U$. Then the Busby invariant $\Delta$ is completely determined by the image of this basis in the trivial vector bundle over $\beta U\setminus U$.

\end{proof}

In the special case of one-dimensional trivial vector bundles we have the following classification.

\begin{cor}
Let $V=C_0(U)$ and $Z=C(\partial U)$. Then the homotopy classes of Hilbert $C^*$-module extensions of the form $0\to C_0(U)\to W\to C(\partial U)\to 0$ with the underlying $C^*$-algebra extension (\ref{3}) are in one-to-one correspondence with the set $[\beta U\setminus U,\mathbb S^1]$ of homotopy classes of maps from the Stone-\v Cech corona to the circle.

\end{cor}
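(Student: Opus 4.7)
The plan is to obtain this corollary as the immediate specialization of the preceding Theorem to the case $k=m=1$, and then identify the resulting Stiefel manifold with the circle.

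First I would verify that the hypotheses of the Theorem are satisfied. Setting $V=C_0(U)$ corresponds to taking $\eta$ equal to the trivial line bundle over $U$, described by the constant projection-valued map $P\equiv 1\in M_1(C_0(U))^\sim$. Its extension $\bar P$ to $\beta U$ is also the constant $1$, hence $\zeta=\zeta(\eta)$ is the trivial line bundle over $\beta U\setminus U$, and by the computation of $Q(V)$ in the Commutative case section we have $Q(V)=C(\beta U\setminus U)$, i.e.\ a free module of rank $m=1$. Likewise $Z=C(\partial U)$ is free of rank $k=1$ over $C(\partial U)$. Thus the Theorem applies and identifies the set of homotopy classes of extensions with $[\beta U\setminus U, V_{1,1}]$.

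Next I would identify the target Stiefel manifold. By definition, $V_{1,1}$ is the space of $1$-frames in $\mathbb{C}^1$, i.e.\ single unit vectors in $\mathbb{C}$, so $V_{1,1}=\{z\in\mathbb{C}:|z|=1\}=\mathbb{S}^1$. Substituting this into the conclusion of the Theorem yields the desired bijection with $[\beta U\setminus U,\mathbb{S}^1]$.

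There is no serious obstacle: the content of the corollary is already packed into the Theorem, and the only residual step is the trivial identification $V_{1,1}=\mathbb{S}^1$. The only point meriting a brief remark is the consistency with the abstract's description in terms of the first \v Cech cohomology of the Stone-\v Cech corona: since $\mathbb{S}^1=K(\mathbb{Z},1)$, one has $[\beta U\setminus U,\mathbb{S}^1]\cong\check{H}^1(\beta U\setminus U;\mathbb{Z})$, which matches the statement in the abstract.
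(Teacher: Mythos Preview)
Your proposal is correct and matches the paper's approach exactly: the corollary is the immediate specialization of the preceding theorem to $k=m=1$, together with the identification $V_{1,1}=\mathbb{S}^1$, and the paper accordingly states it without proof. Your remark on the \v Cech cohomology interpretation is also precisely what the paper records in the subsequent Remark.
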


\begin{remark}
The set $[\beta U\setminus U,\mathbb S^1]$ is canonically isomorphic to the first \v Cech cohomology group $H^1(\beta U\setminus U)$.

\end{remark}

\begin{example}
Let 
$$
U=\{z\in\mathbb C:|z|<1\}, \quad \overline{U}=\{z\in\mathbb C:|z|\leq 1\},
$$ 
and let $X\cong\mathbb S^2$ be the one-point compactification of $U$. This gives rise to a $C^*$-algebra extension
\begin{equation}\label{5}
\begin{xymatrix}{
0\ar[r]& C_0(U)\ar[r]& C(\mathbb S^2)\ar[r]& \mathbb C\ar[r]& 0.
}\end{xymatrix}
\end{equation}

Let $V=C_0(U)$, $Z=\mathbb C$ be free rank one Hilbert $C^*$-modules over $C_0(U)$ and $\mathbb C$, respectively. 

Let $\omega:\partial U\to\partial U=\mathbb S^1$ be a continuous function with winding number $k$. Set 
$$
W_k=\{\alpha\in C(\overline{U}):\alpha|_{\partial U}=\lambda\omega\ \mbox{for some\ }\lambda\in\mathbb C\}.
$$ 
with the obvious module action of $C(\mathbb S^2)$. It is clear that $\langle W_k,W_k\rangle= C(\mathbb S^2)$ considered as an ideal in $C(\overline{U})$. Thus $W_k$ is a full Hilbert $C^*$-module over $C(\mathbb S^2)$. Set $\Pi:W_k\to\mathbb C$ by $\Pi(\alpha)=\lambda$ (it is simple to check that this map is well defined). Then we get a Hilbert $C^*$-module extension
\begin{equation}\label{6}
\begin{xymatrix}{
0\ar[r]& C_0(U)\ar[r]^-{\Phi}& W_k\ar[r]^-{\Pi}& \mathbb C\ar[r]& 0
}\end{xymatrix}
\end{equation}
with the underlying $C^*$-algebra extension (\ref{5}), where $\Phi$ is the inclusion.

\end{example}

\begin{thm}
Hilbert $C^*$-module extensions (\ref{6}) with different $k$ are not homotopy equivalent to each other.

\end{thm}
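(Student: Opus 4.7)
The strategy is to compute the Busby invariant $\Delta_k\colon\mathbb C\to Q(V)$ of $W_k$ explicitly, then invoke the previous corollary to reduce the theorem to distinguishing the classes $\Delta_k(1)$ in $[\beta U\setminus U,\mathbb S^1]$.

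I would begin by identifying $M(V)$ and $Q(V)$: since $V=C_0(U)$ coincides with the underlying $C^*$-algebra, $M(V)=C(\beta U)$ and $Q(V)=C(\beta U\setminus U)\cong C^b(U)/C_0(U)$. Next, from diagram~(\ref{d}), $\Delta_k(1)=\Sigma(\Lambda(\alpha))$ for any $\alpha\in W_k$ with $\Pi(\alpha)=1$. Choosing $\omega(z)=z^k$ and $\alpha(z)=z^k$ for $k\ge 0$ (or $\bar z^{|k|}$ for $k<0$) as an element of $C(\overline U)$, the map $\Lambda$ sends $\alpha$ to the continuous extension of $\alpha|_U\in C^b(U)$ to $\beta U$ and $\Sigma$ is restriction to the corona, so $\Delta_k(1)$ is the class $[z^k]\in C^b(U)/C_0(U)$. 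Since $|z|^{2k}-1\in C_0(U)$, this class is a unitary element of $Q(V)$, and the previous corollary reduces the theorem to showing that these classes are pairwise distinct in $[\beta U\setminus U,\mathbb S^1]$.

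Suppose for contradiction that $[z^k]=[z^{k'}]$ for some $m:=k-k'\ne 0$, which we may take positive. Then the unitary $[z^m]$ is null-homotopic as a map $\beta U\setminus U\to\mathbb S^1$, so it has the form $e^{iG}$ for some $G\in C(\beta U\setminus U,\mathbb R)$. Lifting $G$ via Tietze to a real-valued $G_0\in C(\beta U)\subset C^b(U)$, the equality $[z^m]=[e^{iG_0}]$ in $C^b(U)/C_0(U)$ reads $z^m-e^{iG_0(z)}\in C_0(U)$, and in particular tends to $0$ uniformly as $|z|\to 1$.

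Pulling back along the concentric loops $\gamma_\rho(\theta)=\rho e^{i\theta}$ and using $\rho^m\to 1$, the continuous loop $\theta\mapsto e^{iG_0(\rho e^{i\theta})}$ becomes uniformly close to $\theta\mapsto e^{im\theta}$ on $\mathbb S^1$ for $\rho$ sufficiently close to $1$, hence homotopic to it as maps $\mathbb S^1\to\mathbb S^1$. However the first loop factors through $\mathbb R$ via $G_0\circ\gamma_\rho$ and therefore has winding number $0$, whereas the second has winding number $m\ne 0$---a contradiction. The main technical step is the explicit identification $\Delta_k(1)=[z^k]$; once this is in place, the lift-via-exponential characterization of null-homotopic unitaries and the classical winding-number argument finish the proof.
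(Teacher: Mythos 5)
Your proof is correct, and its first half --- factoring the Busby invariant through $C(\overline{U})$ and $C_b(U)/C_0(U)$ and identifying $\Delta_k(1)$ with the class of $z^k$ in the corona algebra $Q(V)=C(\beta U\setminus U)$ --- matches the paper's computation. Where you genuinely diverge is in the crucial step of showing that the classes $[z^k]\in[\beta U\setminus U,\mathbb S^1]$ are pairwise distinct. The paper proves injectivity of the homomorphism $\mathbb Z\cong[\mathbb S^1,\mathbb S^1]\to[\beta U\setminus U,\mathbb S^1]$ by exhibiting a left inverse: it writes $\beta U\setminus U=\bigcap_r\beta(U\setminus U_r)$, identifies $[\beta U\setminus U,\mathbb S^1]$ with the direct limit of the groups $[\beta(U\setminus U_r),\mathbb S^1]$, and invokes Dowker's theorem to obtain a compatible family of surjections onto $[U\setminus U_r,\mathbb S^1]\cong\mathbb Z$ that splits the map. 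You instead use the fact that a null-homotopic unitary in $C(\beta U\setminus U)$ is an exponential $e^{iG}$, lift $G$ to $C_b(U)$ by Tietze, and reach a contradiction by comparing winding numbers along the circles $|z|=\rho$ as $\rho\to 1$. Your route is more elementary and self-contained --- it avoids both the Dowker citation and the somewhat delicate continuity of $[-,\mathbb S^1]$ under the inverse limit --- at the cost of being tailored to this example; the paper's argument yields the stronger structural fact that $\mathbb Z$ embeds as a split summand of $H^1(\beta U\setminus U)$ and is the sort of argument that transfers to other coronas. One small point worth making explicit in your write-up: the theorem concerns an arbitrary $\omega$ of winding number $k$, so you should record that replacing $\omega$ by $z^k$ (or $\bar z^{|k|}$) changes the extension only within its homotopy class, since a homotopy of $\omega$'s induces a norm-continuous path of Busby invariants.
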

\begin{proof}
First note that the Hilbert $C^*$-module morphisms $\Gamma$ and $\Delta$ factorize through $C(\overline{U})$ and $C(\mathbb S^1)$, respectively:
$$
\begin{xymatrix}{
0\ar[r]& C_0(U)\ar[r]\ar@{=}[d]& W_k\ar[r]^-{\Pi}\ar[d]^-{\Gamma'}& \mathbb C\ar[r]\ar[d]^-{\Delta'}& 0\\
0\ar[r]& C_0(U)\ar[r]\ar@{=}[d]& C(\overline{U})\ar[r]^-{\Pi'}\ar[d]^-{\Gamma''}& C(\mathbb S^1)\ar[r]\ar[d]^-{\Delta''}& 0\\
0\ar[r]& C_0(U)\ar[r]& C_b(U)\ar[r]^-{\Pi''}& C_b(U)/C_0(U)\ar[r]& 0,
}\end{xymatrix}
$$
where $C_b(U)$ is the $C^*$-algebra (and a module) of all bounded continuous functions on $U$, and the maps  $\Gamma''$ and $\Delta''$ are canonical inclusions and $\Pi'$ is the restriction map.

It follows from the definition of $W_k$ and from commutativity of the above diagram that $\Delta(1)=\Delta'(1)=\omega$. Let $\widehat{\Delta}'':\beta U\setminus U\to\mathbb S^1$ be the map Gelfand-dual to $\Delta''$. The function $\omega$ represents an element $[k]$ of $[\mathbb S^1,\mathbb S^1]$. Then $\Delta(1)=\widehat{\Delta}''\circ\omega$ represents an element of $[\beta U\setminus U,\mathbb S^1]$. It remains to show that the group homomorphism 
\begin{equation}\label{0}
\mathbb Z\cong [\mathbb S^1,\mathbb S^1]\to [\beta U\setminus U,\mathbb S^1]=H^1(\beta U\setminus U),
\end{equation}
induced by $\widehat{\Delta}''$ is injective.

To this end, set $U_r=\{z\in\mathbb C:|z|<r\}$, $0<r<1$. Then $\beta(U\setminus U_r)=\beta U\setminus U_r$ and $\beta U\setminus U=\cap_r\beta(U\setminus U_r)$. A standard direct limit argument shows that $[\beta U\setminus U,\mathbb S^1]=\di\lim_{r\to 1} [\beta(U\setminus U_r),\mathbb S^1]$.

The standard argument of C. H. Dowker \cite{D} shows that the map $[\beta (U\setminus U_r],\mathbb S^1]\to[U\setminus U_r,\mathbb S^1]\cong[\mathbb S^1,\mathbb S^1]\cong\mathbb Z$ induced by the Stone-\v Cech compactification $U\setminus U_r\subset\beta(U\setminus U_r)$ is surjective. It is also clear that this surjection is compatible with the map $[\beta(U\setminus U_r),\mathbb S^1]\to[\beta(U\setminus U_s),\mathbb S^1]$ induced by the inclusion $U\setminus U_s\subset U\setminus U_r$, $r<s<1$. Therefore, there is a surjective map $[\beta U\setminus U,\mathbb S^1]\to [\mathbb S^1,\mathbb S^1]$ and its composition with the map (\ref{0}) is the identity, hence (\ref{0}) is injective. 

\end{proof}

\vspace{2cm}

\end{document}